\newtheorem{theorem}{Theorem}
\newtheorem{proposition}{Proposition}
\newtheorem{corollary}{Corollary}
\theoremstyle{remark}
\newtheorem*{remarks}{Remarks}
\newtheorem*{example}{Example}
\newtheorem*{definition}{Definition}
\newcommand{\remove}[1]{ }
\newcommand{\set}[1]{\left\{#1\right\}}
\def\NN{\mathbb N}
\def\uu_q{\mathcal{U}_q}
\def\vv_q{\mathcal{V}_q}
\def\uu{\mathcal{U}}
\def\uuu{\overline{\mathcal{U}}}
\begin{document}

\title{Optimal expansions in non-integer bases}
\author{Karma Dajani}
\address{Mathematics Department, Utrecht University, 3508 TA Utrecht, The Netherlands}
\email{k.dajani1@uu.nl}
\author{Martijn de Vries}
\address{Tussen de Grachten 213, 1381 DZ Weesp, The Netherlands}
\email{martijndevries0@gmail.com}
\author{Vilmos Komornik}
\address{D\'epartement de math\'ematique
       Universit\'e de Strasbourg
       7 rue Ren\'e Descartes
       67084 Strasbourg Cedex, France}
\email{vilmos.komornik@math.unistra.fr}
\author{Paola Loreti}
\address{Dipartimento di Scienze di Base e Applicate
per l'Ingegneria. Sezione di Matematica.
Sapienza  Universita' di Roma
Via A. Scarpa 16,  00161 Roma, Italy}
\email{loreti@dmmm.uniroma1.it}
\subjclass[2010]{Primary:11A63, Secondary: 37A05, 37L40}
\keywords{Greedy expansion, beta-expansion, ergodicity, invariant measure}
\date{\today}
\thanks{Part of this work was done during the visit of the third author at the Department of Mathematics of the Delft
Technical University. He is grateful for this invitation and for the excellent working conditions.}

\begin{abstract}
For a given positive integer $m$, let $A=\set{0,1,\ldots,m}$ and $q \in (m,m+1)$. A sequence $(c_i)=c_1c_2 \ldots$ consisting of elements in $A$ is called an expansion of $x$ if $\sum_{i=1}^{\infty} c_i q^{-i}=x$. It is known that almost every $x$ belonging to the interval $[0,m/(q-1)]$ has uncountably many expansions. In this paper we study the existence of expansions $(d_i)$ of $x$ satisfying the inequalities
$\sum_{i=1}^n d_iq^{-i} \ge \sum_{i=1}^n c_i q^{-i}$ , $n=1,2,\ldots$ for each expansion $(c_i)$ of $x$.

\end{abstract}

\maketitle

\section{Introduction}
\label{s1}
Let $x \in [0,1)$. The decimal expansion
\begin{equation*}
x=\frac{b_1}{10}+\frac{b_2}{10^2}+\frac{b_3}{10^3}+\cdots ,
\end{equation*}
where we choose a finite expansion whenever it is possible, has a well known ``each-step'' optimality property: for each $k=1,2,\ldots ,$ among all finite sequences $c_1 \ldots c_k$ of integers with $0 \le c_i\le 9$ for $i=1,\ldots, k$, satisfying the inequality $\sum_{i=1}^k c_i 10^{-i}\le x,$ the sum $\sum_{i=1}^k b_i 10^{-i}$ is the closest to $x$. An analogous property holds for expansions in all integer bases $2,3,\ldots .$

In his celebrated paper \cite{Ren1957}, R\'enyi generalized these expansions to arbitrary real bases $q>1$ as follows. If $b_1,\ldots, b_{n-1}$ have already been defined for some $n\ge 1$ (no condition for $n=1$), then let $b_n$ be the largest integer satisfying the inequality
\begin{equation*}
\frac{b_1}{q}+\cdots+\frac{b_n}{q^n}\le x.
\end{equation*}
One may readily verify that
\begin{equation*}
\sum_{i=1}^{\infty}\frac{b_i}{q^i}=x;
\end{equation*}
it is called the \emph{greedy} expansion of $x$ in base $q$.

The purpose of this paper is to show that the natural analogue of the above optimality property fails for most non-integer bases, but it still holds for a particular countable set of bases, the smallest of them being the golden ratio $q=(1+\sqrt{5})/2\approx 1.618.$ Before formulating our result precisely we will first introduce expansions of real numbers with respect to a more general set of digits.

\medskip

Given a real number $q >1$ and a finite \emph{alphabet} or \emph{digit set} $A=\set{a_0,\ldots ,a_m}$ consisting of real numbers satisfying $a_0 < \cdots < a_m$, by an \emph{expansion} of $x$ (in \emph{base} $q$ with respect to $A$) we mean a sequence $(c_i)$ of \emph{digits} $c_i\in A$ satisfying
\begin{equation}\label{1}
\sum_{i=1}^{\infty} \frac{c_i}{q^i} =x.
\end{equation}
Pedicini \cite{Ped2005} proved the following basic result on the existence of such expansions.

\begin{proposition}\label{p1} Each $x\in J_{A,q}:= \left[a_0/(q-1),a_m/(q-1)\right]$ has an expansion if and only if
\begin{equation}\label{2}
\max_{1 \le j \le m} (a_j-a_{j-1}) \le \frac{a_m-a_0}{q-1}.
\end{equation}

\end{proposition}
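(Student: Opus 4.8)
The plan is to prove both implications, treating $J_{A,q}$ as the exact range of the expansion map. First I would record the elementary bounds $a_0/(q-1) = \sum_{i\ge 1} a_0 q^{-i} \le \sum_{i\ge 1} c_i q^{-i} \le \sum_{i\ge 1} a_m q^{-i} = a_m/(q-1)$, valid for every digit sequence $(c_i)$, so that any $x$ admitting an expansion automatically lies in $J_{A,q}$; the content of the proposition is whether every such $x$ is actually hit.

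For necessity I would argue by contraposition. Assume \eqref{2} fails for some index $j$, that is, $a_j - a_{j-1} > (a_m-a_0)/(q-1)$. The largest value reachable with first digit $a_{j-1}$ is $\alpha := a_{j-1}/q + a_m/(q(q-1))$ (taking all later digits equal to $a_m$), while the smallest value reachable with first digit $a_j$ is $\beta := a_j/q + a_0/(q(q-1))$. A direct computation shows that $\beta - \alpha > 0$ is equivalent to the failure of \eqref{2}, so there is a nonempty open interval $(\alpha,\beta)$; moreover one checks $a_0/(q-1) \le \alpha$ and $\beta \le a_m/(q-1)$, so this interval lies inside $J_{A,q}$. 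No $x \in (\alpha,\beta)$ can have an expansion: a first digit $\le a_{j-1}$ forces $x \le \alpha$, a first digit $\ge a_j$ forces $x \ge \beta$, and no digit lies strictly between $a_{j-1}$ and $a_j$.

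For sufficiency I would build an expansion by a greedy/interval scheme. Set $y_0 = x$ and, given $y_{n-1} \in J_{A,q}$, seek a digit $c_n \in A$ with $y_n := q y_{n-1} - c_n$ again in $J_{A,q}$; this requires $c_n$ to lie in the interval $I_n := [\,q y_{n-1} - a_m/(q-1),\; q y_{n-1} - a_0/(q-1)\,]$, whose length is exactly $(a_m-a_0)/(q-1)$. Using $y_{n-1} \in J_{A,q}$ one verifies that the right endpoint of $I_n$ is $\ge a_0$ and the left endpoint is $\le a_m$, so $I_n$ meets $[a_0,a_m]$; together with \eqref{2}, which bounds every gap between consecutive digits by the length of $I_n$, this guarantees that $I_n$ contains at least one $a_k$. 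Choosing such a digit keeps $y_n \in J_{A,q}$, and unwinding the recursion gives $x = \sum_{i=1}^n c_i q^{-i} + y_n q^{-n}$; since $J_{A,q}$ is bounded and $q>1$, the error term $y_n q^{-n}$ tends to $0$, and $(c_i)$ is the desired expansion.

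The main obstacle is the sufficiency step, specifically the claim that the admissible interval $I_n$ always contains a digit, since this is exactly where hypothesis \eqref{2} enters. One must combine the overlap of $I_n$ with $[a_0,a_m]$ and the gap bound, handling separately the case where the left endpoint of $I_n$ falls below $a_0$ (then $a_0 \in I_n$) and the case where it falls in some gap $(a_{k-1},a_k]$ (then the bound $a_k - a_{k-1} \le (a_m-a_0)/(q-1)$ places $a_k$ inside $I_n$). Everything else is routine geometric-series bookkeeping.
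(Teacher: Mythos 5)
Your proof is correct, but its sufficiency half takes a genuinely different route from the paper's. For necessity the two arguments coincide: when some gap $a_j-a_{j-1}$ exceeds $(a_m-a_0)/(q-1)$, the open interval between the largest value reachable with first digit $a_{j-1}$ and the smallest value reachable with first digit $a_j$ is nonempty, lies in $J_{A,q}$, and contains no representable point --- this is exactly the interval the paper exhibits (after its normalization $a_0=0$). For sufficiency, however, the paper runs the greedy algorithm --- always take the largest digit $b_n$ with $\sum_{i=1}^n b_i q^{-i}\le x$ --- and proves convergence through an error estimate, the crux being a contradiction argument showing that under \eqref{2} there cannot be a \emph{last} index $n$ with $b_n<a_m$. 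You instead maintain the two-sided invariant $y_n\in J_{A,q}$ and ask only for \emph{some} admissible digit at each step; hypothesis \eqref{2} enters through your claim that the admissible interval $I_n$, which has length exactly $(a_m-a_0)/(q-1)$ and overlaps $[a_0,a_m]$, must contain a digit, and your two-case verification of this (left endpoint below $a_0$, or left endpoint in a gap $(a_{k-1},a_k]$) is sound. Convergence is then automatic from boundedness of $y_n$, with no case distinction between finitely and infinitely many non-maximal digits, so your argument is arguably cleaner as a pure existence proof. What the paper's choice buys is that its proof constructs the specific \emph{greedy} expansion, i.e.\ the lexicographically largest one; this is precisely the object ($b_i(x,A,q)$ and the transformation $T$) on which the rest of the paper is built, and the sentence immediately following the proposition relies on that construction. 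Your scheme produces an expansion that need not be canonical, so it proves the proposition as stated but would not by itself supply that follow-up.
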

For convenience of the reader we provide an elementary proof of this proposition. Observe that $(c_i)$ is an expansion of $x$ in base $q$ with respect to $A$ if and only if  $(c_i-a_0)=(c_1-a_0)(c_2-a_0) \ldots$ is an expansion of $x - a_0/(q-1)$ in base $q$ with respect to the alphabet
$\set{0, a_1-a_0, \ldots, a_m -a_0}$. Moreover, the inequality ~\eqref{2} holds if and only if the same inequality holds with $a_j-a_0$ in place of $a_j$, $0 \le j \le m$. Hence we may (and will) assume in the rest of this paper that $a_0=0$.

\begin{proof}[Proof of Proposition~\ref{p1}]
First assume that the inequality ~\eqref{2} holds. We define recursively a sequence $(b_i)$ with digits $b_i$ belonging to $A$ by applying the following \emph{greedy algorithm}: if for some integer $n\in \NN:=\set{1,2,\ldots}$ the digits $b_i$ have already been defined for all $1\le i<n$ (no condition for $n=1$), then let $b_n$ be the largest digit in $A$ satisfying the inequality $\sum_{i=1}^n b_i q^{-i} \le x$. Note that this algorithm is well defined for each $x \ge 0$. We show that $(b_i)$ is an expansion of $x$ for each $x$ belonging to $J_{A,q}$.

If $x = a_m/(q-1)$, then the greedy algorithm provides $b_i = a_m$ for all $i \ge 1$ whence $(b_i)$ is indeed an expansion of $x$.

If $0 \le x < a_m / (q-1)$, then there exists an index $n$ such that $b_n < a_m$. If $b_n < a_m$ for infinitely many $n$, then for each such $n$ we have
\begin{equation*}
0 \le x - \sum_{i=1}^{n} \frac{b_i}{q^i} < \frac{\max_{1 \le j \le m}(a_j-a_{j-1})}{q^n}.
\end{equation*}
Letting $n \to \infty$, we see that $(b_i)$ is an expansion of $x$.
Next we show that there cannot be finitely many $n$ such that $b_n < a_m$. Indeed, if there were a last index $n$ with $b_n = a_j < a_m$, then
\begin{equation*}
\left(\sum_{i=1}^{n} \frac{b_i}{q^i}\right) + \sum_{i=n+1}^{\infty} \frac{a_m}{q^i} \le x < \left(\sum_{i=1}^n \frac{b_i}{q^i}\right) + \frac{a_{j+1}-a_j}{q^n}
\end{equation*}
or equivalently
\begin{equation*}
\frac{a_m}{q-1} < a_{j+1} - a_j
\end{equation*}
contradicting \eqref{2}.

Finally, if the condition \eqref{2} does not hold, and $a_{\ell} - a_{\ell-1} > a_m/(q-1)$ for some $\ell \in \set{1, \ldots, m}$, then none of the numbers belonging to the nonempty interval
\begin{equation*}
\left(\frac{a_{\ell-1}}{q} + \sum_{i=2}^{\infty} \frac{a_m}{q^i} , \frac{a_{\ell}}{q} \right) \subset J_{A,q}
\end{equation*}
has an expansion.
\end{proof}

The proof of Proposition~\ref{p1} shows that if \eqref{2} holds, then each $x \in J_{A,q}$ has a lexicographically largest expansion $(b_i(x,A,q))$ which we call the \emph{greedy expansion} of $x$. The \emph{normalized errors} of an arbitrary expansion $(c_i)$ of $x$ are defined by
\begin{equation*}
\theta_n((c_i)):= q^n\left(x-\sum_{i=1}^n \frac{c_i}{q^i}\right), \quad n \in \NN.
\end{equation*}
We call an expansion $(d_i)$ of $x$ \emph{optimal} if $\theta_n((d_i)) \le \theta_n ((c_i))$ for each $n \in \NN$ and each expansion $(c_i)$ of $x$. It follows from the definitions that only the greedy expansion of a number $x \in J_{A,q}$ can be optimal. The following example shows that the greedy expansion of a number $x \in J_{A,q}$ is not always optimal. Other examples can be found in \cite{DajKra2002}.

\begin{example}
Let $A=\set{0,1}$ and $1 < q < (1+\sqrt{5})/2$. The sequence $(c_i):=011(0)^{\infty}$ is clearly an expansion of $x:=q^{-2}+q^{-3}$. Applying the greedy algorithm we find that the first three digits of the greedy expansion $(b_i)=(b_i(x,A,q))$ of $x$ equal $100$. Hence $\theta_3((b_i)) > \theta_3((c_i))=0$.
\end{example}

Let $A=\set{0,1 , \ldots,m}$ and $q \in (m,m+1)$ for some positive integer $m$. Proposition~\ref{p1} implies that in this case each $x \in J_{A,q}$ has an expansion. Let $P$ be the set consisting of those bases $q \in (m, m+1)$ which satisfy one of the equalities
\begin{equation*}
1= \frac{m}{q} + \cdots + \frac{m}{q^n} + \frac{p}{q^{n+1}}, \quad n \in \NN \mbox{ and } p \in \set{1, \ldots,m}.
\end{equation*}
We have the following dichotomy:

\begin{theorem}\label{t1}\mbox{}
\begin{itemize}
\item[\rm (i)] If $q \in P$, then each $x \in J_{A,q}$ has an optimal expansion.
\item[\rm (ii)] If $q \in (m,m+1) \setminus P$, then the set of numbers $x \in J_{A,q}$ with an optimal expansion is nowhere dense and has Lebesgue measure zero.
\end{itemize}
\end{theorem}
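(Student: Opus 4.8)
\textit{The plan is to reduce the whole dichotomy to a single orbit condition for the greedy map, and then analyze that condition by two completely different techniques in the two cases.}

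First I would record the elementary reduction that is already half-present in the excerpt. Minimizing $\theta_n$ is the same as maximizing the partial sum $s_n((c_i)):=\sum_{i=1}^n c_iq^{-i}$, and since (as the excerpt notes) only the greedy expansion $(b_i)=(b_i(x,A,q))$ can be optimal, $x$ has an optimal expansion if and only if the greedy expansion is optimal, i.e. if and only if for every $n$ the greedy partial sum equals $L_n(x):=\max\set{s_n((c_i)) : (c_i)\text{ an expansion of }x}$, the largest level-$n$ sum not exceeding $x$. Writing $R:=m/(q-1)$, letting $\theta_k:=\theta_k((b_i))=T^kx$ for the greedy transformation $Tt:=qt-\min(m,\lfloor qt\rfloor)$ on $[0,R]$, and introducing the deficiency $D_k:=\sum_{j\ge 1}(m-b_{k+j})q^{-j}=R-\theta_k$, I would reformulate optimality entirely in terms of the orbit $(\theta_k)_{k\ge 0}$.

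Second, the combinatorial heart is a description of when the greedy prefix fails to be value-maximal. The only way to raise the value of a length-$n$ greedy prefix while staying $\le x$ is to lower some digit $b_\ell\ge 1$ and fill positions $\ell+1,\dots,n$ with $m$; a direct computation shows this modification stays $\le x$ exactly when $q^{\,n-\ell}(D_\ell-1)\le R$ (validity) and strictly raises the value exactly when $\sigma^{(\ell)}_{n-\ell}:=\sum_{j=1}^{n-\ell}(m-b_{\ell+j})q^{-j}>1$ (beating). Hence $x$ is optimal iff for every $\ell\ge 1$ with $b_\ell\ge 1$ the first index $t^\ast$ with $\sigma^{(\ell)}_{t^\ast}>1$ (if it exists) already violates validity; using the identity $D_\ell-1=(\sigma^{(\ell)}_{t^\ast}-1)+q^{-t^\ast}D_{\ell+t^\ast}$ this is the clean inequality $q^{\,t^\ast}(\sigma^{(\ell)}_{t^\ast}-1)>\theta_{\ell+t^\ast}$. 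I would package this as a survivor statement: there is a Borel set $H\subseteq[0,R]$ (those $z$ whose greedy expansion admits a valid strict beating at its first position) with the property that $x$ has an optimal expansion if and only if $T^kx\notin H$ for all $k\ge 0$. The example in the excerpt ($011(0)^\infty$ beating $100$ below the golden ratio) is precisely a valid strict beating, so $H\neq\varnothing$ there; note also that a mere ``tie'' $\sigma^{(\ell)}_t=1$ does \emph{not} destroy optimality, which is why the boundary case is delicate.

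For (i) I would use the arithmetic of $P$. When $q\in P$ one checks that $m^n p\,0^\infty$ is in fact the greedy expansion of $1$ and that the quasi-greedy expansion of $1$ is the purely periodic sequence $(m^n(p-1))^\infty$. Feeding the admissibility inequalities satisfied by every greedy expansion (each shifted tail is lexicographically dominated by the quasi-greedy expansion of $1$) into the beating sum $\sigma^{(\ell)}_t$, the finiteness and the all-$m$ head of the expansion of $1$ force $\sigma^{(\ell)}_t$ to reach the threshold $1$ only \emph{with equality}, exactly at the edge of the validity window; consequently $q^{\,t^\ast}(\sigma^{(\ell)}_{t^\ast}-1)>\theta_{\ell+t^\ast}$ always holds, $H=\varnothing$, and every $x\in J_{A,q}$ is optimal. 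For (ii), when $q\notin P$ the expansion of $1$ is not of this shape, and I would first exhibit one greedy expansion whose complement partial sums jump strictly past $1$ inside the validity window, so $H\neq\varnothing$; since the greedy digits are constant on cylinders, $H$ then contains a cylinder and so has positive Lebesgue measure and nonempty interior. The map $T$ carries an ergodic invariant measure equivalent to Lebesgue (R\'enyi--Gelfond--Parry, in the digit-set form of \cite{DajKra2002}), so by ergodicity almost every orbit meets the positive-measure hole $H$ and the survivor set $\set{x:T^kx\notin H\text{ for all }k}$ is Lebesgue-null; topological transitivity of the associated greedy subshift makes the preimages $T^{-k}(\mathrm{int}\,H)$ dense, whence the survivor set has empty interior and is nowhere dense.

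The step I expect to be the main obstacle is this strict-versus-tie dichotomy keyed exactly to $P$: proving that the finite special expansion of $1$ forces the equalities that block \emph{every} beating (part (i)), while its absence yields a genuinely strict beating on a set of positive measure (nonemptiness and fatness of $H$ in part (ii)). Establishing ergodicity and topological transitivity of $T$ on $[0,R]$ for the alphabet $\set{0,\dots,m}$ is routine but must also be put in place.
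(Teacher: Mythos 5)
Your reduction to the ``hole'' $H$ is where the proof breaks. The claim that the only way to raise the value of a length-$n$ greedy prefix while staying $\le x$ is to lower some digit $b_\ell\ge 1$ and fill positions $\ell+1,\dots,n$ with $m$ is false, and with it the criterion keyed to the \emph{first} index $t^\ast$ with $\sigma^{(\ell)}_{t^\ast}>1$. That canonical modification is merely the value-largest prefix below the greedy one with first difference at $\ell$; it can overshoot $x$ (fail validity) while a less aggressive modification (lower $b_\ell$ by one, but take a tail that is \emph{not} all $m$'s) still beats the greedy prefix and stays $\le x$. Concrete counterexample: $m=1$, $q=11/10$, $z=q^{-3}+q^{-4}\approx 1.4343$. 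The greedy expansion of $z$ begins $1000$ (here $\theta_1=qz-1=q^{-2}+q^{-3}-1\approx 0.578$ and $\theta_2,\theta_3,\theta_4<1$), so the greedy level-$4$ sum is $q^{-1}\approx 0.909$, while the expansion $0011(0)^\infty$ of $z$ has level-$4$ sum equal to $z$ itself; hence the greedy expansion is not optimal and $z$ has no optimal expansion at all. Yet your criterion certifies $z$ as optimal: at $\ell=1$ the first beating index is $t^\ast=2$, and validity fails there since $q^{2}(\sigma^{(1)}_2-1)=q+1-q^{2}=0.89$ exceeds $\theta_3=1+q^{-1}-q^{2}\approx 0.699$ (equivalently, the canonical prefix $011$ has value $q^{-2}+q^{-3}\approx 1.578>z$), and once validity fails at $t^\ast$ it fails forever (your own recursion $g_{t+1}=qg_t+m$); the same failure occurs at every later $\ell$ with $b_\ell=1$, because after each nonzero greedy digit of a point of $[0,1)$ the remainder is less than $q-1=0.1$, so $\theta_{\ell+2}<0.121<0.89$. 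So ``no canonical valid beating along the orbit'' does not imply optimality, and part (i) of your plan rests on exactly this false implication. The paper avoids the issue by comparing the greedy prefix with the \emph{value-maximal} prefix over the block alphabet $A_k$, i.e.\ $T_k$ versus $T^k$ (Proposition~2), which accounts for arbitrary digit modifications, and then runs the $q\in P$ combinatorics on the sets $S_{A,q,k}$ (Proposition~3(i)).

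The same gap is fatal for part (ii), not only part (i). For $q=11/10$ the computation above shows your $H$ is disjoint from $[0,1)$: a first-position beating needs $b_1(z')=1$, i.e.\ $z'\in[q^{-1},1)$, whence $\theta_1<0.1$ and validity fails at $t^\ast=2$. But $[0,1)$ is $T$-invariant and carries the absolutely continuous invariant measure (the density vanishes on $[1,m/(q-1)]$, so the measure is \emph{not} equivalent to Lebesgue on all of $J_{A,q}$, and $T$ is not topologically transitive there either, since $(1,m/(q-1))$ is never revisited). Consequently your hole is null for the invariant measure, Birkhoff gives nothing, and your survivor set has full measure: the argument would ``prove'' that almost every $x$ has an optimal expansion, the opposite of Theorem~1(ii). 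This is precisely the delicate regime $1\le q^{-2}+q^{-3}$ that the paper treats separately in Proposition~3(iii), where the beating prefixes it constructs inside $[0,1)$ (namely $0^{\ell-1}11$ against greedy $10^{\ell}$) are necessarily non-canonical --- confirming that no argument restricted to canonical modifications can succeed. To repair the proposal one must define $H$ by the condition $T_k\ne T^k$ (greedy prefix differs from value-maximal prefix) and then construct, for every $q\notin P$, an interval of such points inside $[0,1)$, which is essentially the content of Propositions~2 and~3 of the paper.
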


In Section~\ref{s2} we compare greedy expansions with respect to different alphabets. This gives us a characterization of optimal expansions which is essential to our proof of Theorem~\ref{t1} in Section~\ref{s3}. In Section~\ref{s4} we briefly discuss optimal expansions of real numbers in negative integer bases.
\section{Greedy expansions}
\label{s2}

Consider an alphabet $A=\set{a_0,a_1, \ldots,a_m}$ $(0=a_0 < \cdots < a_m)$ and a base $q$ satisfying the condition \eqref{2} as in the preceding section.
Let the \emph{greedy transformation} $T : J_{A,q} \to J_{A,q}$ corresponding to $(A,q)$ be given by
\begin{equation*}
T(x):=
\begin{cases}
qx-a_j&\text{if $x \in C(a_j):=\left[\frac{a_{j}}{q}, \frac{a_{j+1}}{q}\right)$, $0 \le  j < m$,}\\
qx-a_m&\text{if $x \in C(a_m):=\left[\frac{a_m}{q}, \frac{a_m}{q-1}\right]$.}
\end{cases}
\end{equation*}
Observe that $b_i(x,A,q)=a_j$ if and only if $T^{i-1}(x) \in C(a_j)$, $i \ge 1$.

For any fixed positive integer $k$, the equation \eqref{1} can be rewritten in the form
\begin{equation*}
\frac{d_1}{q^k}+\frac{d_2}{q^{2k}}+\cdots =x
\end{equation*}
by setting
\begin{equation*}
d_i:=\sum_{j=0}^{k-1}c_{ik-j}q^j,\quad i=1,2,\ldots .
\end{equation*}
In other words, every expansion in base $q$ with respect to the alphabet $A$ can also be considered as an expansion in base $q^k$ with respect to the alphabet
\begin{equation*}
A_k:=\set{c_1q^{k-1}+\cdots+c_k\ :\ c_1,\ldots, c_k\in A}\footnote{Other aspects of expansions with respect to alphabets of the form $A_k$ are studied in \cite{DarKat1988}, \cite{KomLor}.}.
\end{equation*}
(For $k=1$ this reduces to the original case.) In particular we have
\begin{equation*}
J_{A_k,q^k}=J_{A,q}
\end{equation*}
for every $k$. We may therefore compare the greedy transformation $T_k$ corresponding to $(A_k,q^k)$ with the $k$-th iteration $T^k$ of the map $T$ corresponding to $(A,q)$. It is easily seen that $T_k(x) \le T^k (x)$ for each $x \in J_{A,q}$ but in general we do not have equality here.

Given $(A,q)$ and a positive integer $k$, we denote by $S_{A,q,k}$ the set of sequences $(c_1,\ldots, c_k) \in A^k$ satisfying the following condition: if $(d_1,\ldots, d_k)\in A^k$ and $(d_1,\ldots, d_k)>(c_1,\ldots, c_k)$, then
\begin{equation*}
\sum_{i=1}^k\frac{d_i}{q^i}\ne \sum_{i=1}^k\frac{c_i}{q^i}.
\end{equation*}
For each $x \in J_{A,q}$, the sequence $b_1(x, A,q)\ldots b_k(x,A,q) 0^{\infty}$ is the greedy expansion in base $q$ with respect to $A$ of the number
\begin{equation*}
\sum_{i=1}^k \frac{b_i(x, A,q)}{q^i}
\end{equation*}
as follows from the definition of the greedy algorithm. Hence
\begin{equation*}
S_{A,q,k} \supset \set{(b_1(x,A,q),\ldots,b_k(x,A,q)): x \in J_{A,q}}.
\end{equation*}
Let the injective map $f: S_{A,q,k} \to J_{A,q}$ be given by
\begin{equation}\label{3}
f((c_1, \ldots , c_k))= \frac{c_1}{q} + \cdots + \frac{c_k}{q^k}, \quad (c_1, \ldots, c_k) \in S_{A,q,k}.
\end{equation}

\begin{proposition}\label{p2} The following statements are equivalent.
\begin{itemize}
\item[\rm(i)] The map $f$ is increasing.
\item[\rm (ii)] $T_k = T^k$.
\item[\rm (iii)] $S_{A,q,k} =  \set{(b_1(x,A,q),\ldots,b_k(x,A,q)): x \in J_{A,q}}$.
\end{itemize}
\end{proposition}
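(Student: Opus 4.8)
The plan is to recast all three conditions as statements comparing the lexicographic order on $A^k$ with the numerical order induced by $f$, and then to prove the equivalence cyclically, as (i) $\Rightarrow$ (iii) $\Rightarrow$ (ii) $\Rightarrow$ (i). First I would record the description of the greedy prefix that is implicit in the greedy algorithm: for every $x \in J_{A,q}$ the tuple $(b_1(x,A,q),\ldots,b_k(x,A,q))$ is the \emph{lexicographically largest} element of $A^k$ whose value $\sum_{i=1}^k c_iq^{-i}$ does not exceed $x$. This follows coordinate by coordinate, since the largest admissible $c_1$ (with least completion all zeros) is exactly $b_1$, and so on. Two by-products are worth isolating: such a lex-largest tuple is automatically the lex-largest representative of its own value, which re-proves $G_{A,q,k} := \set{(b_1(x,A,q),\ldots,b_k(x,A,q)) : x \in J_{A,q}} \subseteq S_{A,q,k}$; and $f$ is injective on $S_{A,q,k}$. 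Next I would rewrite the two maps: since $T^k(x)=q^k\bigl(x-\sum_{i=1}^k b_i(x,A,q)q^{-i}\bigr)$ and $T_k(x)=q^k\bigl(x-v^*(x)\bigr)$ with $v^*(x):=\max\set{f(s) : s\in S_{A,q,k},\ f(s)\le x}$, condition (ii) is equivalent to saying that for every $x$ the greedy prefix value $f((b_i(x,A,q)))$ equals the maximal attainable value $v^*(x)$ not exceeding $x$.

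With these reformulations each implication is short. For (i) $\Rightarrow$ (iii): given $s\in S_{A,q,k}$, apply the greedy characterization at $x=f(s)$ and let $t$ be the lex-largest tuple with $f(t)\le f(s)$; then $t\in G_{A,q,k}$. Since $s$ itself satisfies $f(s)\le f(s)$, maximality gives $t\ge s$ in lex order, and $t>s$ would force $f(t)>f(s)$ by (i), which is impossible; hence $t=s$ and $s\in G_{A,q,k}$. For (iii) $\Rightarrow$ (ii): let $s^*$ be the lex-representative of $v^*(x)$; by (iii) it is the greedy prefix of $f(s^*)=v^*(x)$, i.e. the lex-largest tuple of value $\le v^*(x)$. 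The greedy prefix $(b_i(x,A,q))$ of $x$ has value $\le v^*(x)\le x$ and is lex $\ge s^*$; if its value were strictly below $v^*(x)$ it would be lex-strictly above $s^*$ while still satisfying $f\le v^*(x)$, contradicting the maximality of $s^*$. For (ii) $\Rightarrow$ (i): if $s<s'$ in $S_{A,q,k}$ but $f(s)>f(s')$, put $x=f(s)$, so $v^*(x)=f(s)$ and by (ii) the greedy prefix of $x$ also has value $f(s)$; yet that prefix is lex $\ge s'>s$, and being an element of $S_{A,q,k}$ with the same $f$-value as $s$, injectivity forces it to equal $s$, a contradiction.

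The main obstacle is establishing the greedy characterization cleanly and then applying it consistently, because every step pivots on the tension between the lexicographic order on $A^k$ and the numerical order of $f$; one must keep firmly in mind that ``greedy prefix'' means the \emph{lex-largest} tuple of value $\le x$, not merely some tuple of value $\le x$. Once that lemma and the identity $T_k(x)=q^k\bigl(x-v^*(x)\bigr)$ are in place, each of the three implications collapses to a two-line comparison of a greedy prefix with the lex-representative $s^*$ of the relevant value, and the injectivity of $f$ on $S_{A,q,k}$ supplies the final contradictions.
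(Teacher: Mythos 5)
Your proof is correct. It does, however, run the equivalence cycle in the opposite direction from the paper: the paper proves (i) $\Rightarrow$ (ii) $\Rightarrow$ (iii) $\Rightarrow$ (i), while you prove (i) $\Rightarrow$ (iii) $\Rightarrow$ (ii) $\Rightarrow$ (i). The shared core is exactly the two facts you isolate at the outset: the greedy prefix of $x$ is the lexicographically largest element of $A^k$ whose value is at most $x$ (valid because $a_0=0$, so discarded tails are nonnegative), and $T_k(x)=q^k\left(x-v^*(x)\right)$, where $v^*(x)$ is the largest attainable value not exceeding $x$; the paper uses both of these implicitly in its terse (i) $\Rightarrow$ (ii) step. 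Where the routes genuinely differ is in how (i) is recovered: the paper's (iii) $\Rightarrow$ (i) invokes the order isomorphism $x<y$ if and only if $(b_i(x,A,q))<(b_i(y,A,q))$ between points and their full (infinite) greedy expansions, whereas your (ii) $\Rightarrow$ (i) never mentions infinite expansions, relying only on injectivity of $f$ on $S_{A,q,k}$ and the $v^*$ identity. Conversely, the paper's (ii) $\Rightarrow$ (iii) is a quick contrapositive (if a sequence in $S_{A,q,k}$ were not a greedy prefix, its value $x'$ would satisfy $T_k(x')=0<T^k(x')$), which is slightly slicker than your direct (iii) $\Rightarrow$ (ii). The trade-off: your version is more uniform and self-contained, since every implication reduces to comparing a greedy prefix with the lexicographic representative of $v^*(x)$, while the paper's is shorter because it borrows the standard monotonicity of greedy expansions.
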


\begin{proof}
(i) $\Rightarrow$ (ii).
Given any $x\in J_{A,q}$, let $(c_1,\ldots, c_k)$ be the lexicographically largest sequence in $A^k$ satisfying
\begin{equation*}
s:=\frac{c_1}{q}+\cdots+\frac{c_k}{q^k}\le x.
\end{equation*}
Then $(c_1,\ldots, c_k) \in S_{A,q,k}$, and (i) implies that $T_k(x)=q^k(x-s)$. On the other hand, we also have
$T^k(x)=q^k(x-s)$ by definition of the greedy expansion.

(ii) $\Rightarrow$ (iii). Assume that $(c_1, \ldots ,c_k) \in S_{A,q,k}$, and let
\begin{equation*}
x':= \sum_{i=1}^k \frac{c_i}{q^i}.
\end{equation*}
If we had $(c_1, \ldots, c_k) \notin \set{(b_1(x,A,q),\ldots,b_k(x,A,q)): x \in J_{A,q}}$, then there would exist an index $m > k$ such that $b_m(x',A,q) \not=0$, whence $T_k(x')=0 < T^k(x')$, contradicting (ii).

(iii) $\Rightarrow$ (i). As already observed above, the sequence $b_1(x,A,q) \ldots b_k(x,A,q) 0^{\infty}$ is the greedy expansion of the number
\begin{equation*}
\sum_{i=1}^k \frac{b_i(x,A,q)}{q^i}.
\end{equation*}
It remains to note that $x < y$ if and only if $(b_i(x,A,q)) < (b_i(y,A,q))$ for numbers $x$ and $y$ belonging to $J_{A,q}$.
\end{proof}

\begin{remarks}\mbox{}
\begin{itemize}
\item[\rm (i)] Observe that the maps $T_k$ and $T^k$ are continuous from the right. Hence if $T_k \not = T^k$, then the maps $T_k$ and $T^k$ differ on a whole interval.
\item[\rm (ii)] If $T_k \not= T^k$, then $T_n \not= T^n$ for all $n \ge k$. In order to prove this, it is sufficient to show that $T_{k+1}\not=T^{k+1}$. By Proposition~\ref{p2} there exist two sequences $(b_1,\ldots,b_k)$, $(c_1,\ldots,c_k)$ both belonging to $S_{A,q,k}$ such that $(b_1,\ldots,b_k)<(c_1,\ldots,c_k)$, and
\begin{equation*}
\sum_{i=1}^k\frac{b_i}{q^i} > \sum_{i=1}^k\frac{c_i}{q^i}.
\end{equation*}
Note that the sequences
$(a_m,b_1,\ldots,b_k)$ and $(a_m,c_1,\ldots,c_k)$ both belong to $S_{A,q,k+1}$, and
\begin{equation*}
\frac{a_m}{q} + \sum_{i=1}^k\frac{b_i}{q^{i+1}} > \frac{a_m}{q} + \sum_{i=1}^k\frac{c_i}{q^{i+1}}.
\end{equation*}
Applying Proposition~\ref{p2} once more, we reach the desired conclusion.
\end{itemize}
\end{remarks}

\section{Proof of Theorem~\ref{t1}}
\label{s3}
 Let $m$ be a given positive integer. Throughout this section we consider expansions with respect to the alphabet $A=\set{0,1,\ldots, m}$ in a base $q$ belonging to $(m,m+1)$. For any integers $n\ge 1$ and $0\le p\le m$ we denote by $q_{m,n,p}$ the positive solution of the equation
\begin{equation*}
1=\frac{m}{q}+\cdots+\frac{m}{q^n}+\frac{p}{q^{n+1}}.
\end{equation*}
We have
\begin{equation*}
m=q_{m,1,0}<\cdots<q_{m,1,m}=q_{m,2,0}<\cdots<q_{m,2,m}=q_{m,3,0} <\cdots
\end{equation*}
and
\begin{equation*}
q_{m,n,p}\to\ m+1
\quad\text{if}\quad
n\to\infty .
\end{equation*}
Recall that the set $P$ introduced in Section~\ref{s1} consists of the numbers $q_{m,n,p}$ with $n \ge1$ and $1 \le p \le m$.
\begin{proposition}\label{p3} Let $n \ge 1$ and $ 1\le p \le m.$
\begin{itemize}
\item[\rm (i)] If $q=q_{m,n,p}$, then $T_k=T^k$ for all $k \ge 1$.

\item[\rm (ii)] If $q_{m,n,p-1}<q<q_{m,n,p}$, then $T_k=T^k$ if and only if $k \le n+1$.

\item[\rm (iii)] If $q \in (m,m+1) \setminus P$, then there exists a positive integer $k=k(q)$ such that the maps $T_k$ and $T^k$ differ on an interval contained in $[0,1)$.
\end{itemize}

\end{proposition}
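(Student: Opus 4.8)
My plan rests on Proposition~\ref{p2}: since $T_k=T^k$ is equivalent to $f$ being increasing on $S_{A,q,k}$, I first record the elementary estimate that governs monotonicity. If $(b_i),(c_i)\in A^k$ first differ at a position $j$ with $b_j<c_j$, then bounding each later digit difference by $m$ gives
\[
\sum_{i=1}^k\frac{b_i}{q^i}-\sum_{i=1}^k\frac{c_i}{q^i}\le \frac1{q^j}\Big(\sum_{l=1}^{k-j}\frac{m}{q^l}-1\Big).
\]
Because $q>q_{m,n,0}$ forces $\sum_{l=1}^{n}m/q^l<1$, this is negative whenever $k-j\le n$, i.e. whenever $k\le n+1$ (the worst case being $j=1$). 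Hence $f$ is strictly increasing on all of $A^k$ for $k\le n+1$, which already yields $T_k=T^k$ for $k\le n+1$ in both (i) and (ii). The second ingredient is that $f$ fails to be increasing on $S_{A,q,k}$ \emph{exactly} when $S_{A,q,k}$ contains a word that is not a greedy prefix: if $(b_i)\in S_{A,q,k}$ is not greedy, its greedy $k$-prefix $(c_i)$ of $v:=f((b_i))$ has value $f((c_i))<v$ (otherwise $(b_i)$ would coincide with it), and since $(b_i)0^{\infty}$ is dominated by the greedy expansion we get $(b_i)<(c_i)$ while $f((b_i))=v>f((c_i))$, a reversal; conversely a reversal forces a non-greedy word. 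So everything reduces to deciding whether some value with a length-$k$ expansion has its greedy $k$-prefix strictly below it.

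For the failing direction of (ii) I take $k=n+2$ and the candidate $(b_i)=0\,m^{\,n+1}$ (digit $0$ followed by $n+1$ copies of $m$), of value $v=\sum_{i=2}^{n+2}m/q^i$, noting $1/q<v<2/q$ (the right inequality since $\sum_{i=1}^{n+1}m/q^i<2$ for $q>q_{m,n,0}$). If the greedy $(n+2)$-prefix of $v$ equalled $v$, its first digit would be $1$, and writing $1/q+\sum_{i=2}^{n+2}u_i/q^i=v$ one would obtain $\sum_{l=1}^{n+1}(m-u_{l+1})/q^l=1$, a length-$(n+1)$ expansion of $1$. The key Lemma I would prove is that \emph{for $q\in(q_{m,n,p-1},q_{m,n,p})$ the number $1$ has no expansion of length $\le n+1$}: comparing a hypothetical $h_1\cdots h_{n+1}0^{\infty}$ with the greedy expansion $m^n(p-1)(\cdots)$ of $1$ at their first discrepancy $j$ gives, for $j\le n+1$, the inequality $\sum_{l=1}^{n+1-j}m/q^l\ge1$, contradicting $\sum_{l=1}^{n}m/q^l<1$; while agreement on the first $n+1$ digits forces $q=q_{m,n,p-1}$, excluded. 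Consequently the greedy prefix of $v$ is strictly below $v$, so at $x=v$ we get $T_{n+2}(x)=0<T^{n+2}(x)$. By the second Remark following Proposition~\ref{p2} this propagates to all $k\ge n+2$, finishing (ii); and (iii) then follows from the first Remark (the maps differ on an interval), the difference being transported into $[0,1)$ by a routine rescaling, applying the same construction to the value $q^{-s}v\in[0,1)$ for $s$ large, where the shifted Lemma again keeps the greedy prefix strictly below.

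The genuinely hard case is (i): at $q=q_{m,n,p}$ I must show that \emph{no} level $k$ produces a non-greedy word in $S_{A,q,k}$, equivalently that whenever a value admits a finite expansion its greedy expansion is finite and no longer. Here the exact relation $1=\sum_{i=1}^{n}m/q^i+p/q^{n+1}$ is decisive: it furnishes the single-digit identity $q^{-j}=\sum_{l=1}^{n}m/q^{\,j+l}+p/q^{\,j+n+1}$, which I would use as a value-preserving, lexicographically increasing rewriting (``carry'') rule to normalize any finite expansion toward the greedy one. Each carry strictly raises the word in lexicographic order and does not increase its length, so the process terminates at a greedy word of length at most the original. \textbf{I expect this normalization to be the main obstacle}, and it is purely the integrality step: one must show that when the greedy inequality fails at a position $j$ the tail excess is precisely the digit-representable amount supplied by the relation, so that the carry can actually be performed with digits in $A$. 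This is exactly the mechanism that collapses a would-be reversal into an \emph{equality} of values (so the competing word is not lexicographically maximal), and it is where the special algebraic nature of $q_{m,n,p}$, as opposed to the generic $q$ handled by the Lemma above, must be exploited; for $q_{m,n,p}$ with $p=m$ one first rewrites $q_{m,n,m}=q_{m,n+1,0}$ so that the relation reads $1=\sum_{i=1}^{n+1}m/q^i$ and the same argument applies.
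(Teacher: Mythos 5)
Your treatment of (ii) is correct and close in spirit to the paper's: the estimate giving $T_k=T^k$ for $k\le n+1$ is the same one the paper uses, and your witness $0m^{n+1}$ together with the lemma that $1$ has no expansion of length $\le n+1$ when $q_{m,n,p-1}<q<q_{m,n,p}$ is a valid substitute for the paper's pair $10^{n+1},\,0m^np\in S_{A,q,n+2}$. The genuine problems are in (i) and (iii).

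Part (i) is not proved. You reduce it to a carry/normalization claim and then explicitly flag the decisive step (that when the greedy inequality fails, the tail excess is exactly digit-representable, so the carry can be performed inside $A$) as ``the main obstacle.'' That step is the entire content of (i); as written you have a plan, not a proof. The paper closes precisely this hole with a short quantitative argument of which you would need some version: at $q=q_{m,n,p}$ no word of $S_{A,q,k}$ contains a block $am^nb$ with $a<m$ and $b\ge p$ (it could be rewritten as the lexicographically larger, value-equal $(a+1)0^n(b-p)$); hence, past the first discrepancy index $j$, every block of $n+1$ consecutive digits of the smaller word has value at most $m/q+\cdots+m/q^n+(p-1)/q^{n+1}=1-q^{-(n+1)}$, and summing the geometric series bounds the tail strictly by $q^{-j}$, so $f$ is increasing. (Alternatively, your ``terminal words of the carry process are greedy'' claim follows from Parry's lexicographic characterization of greedy sequences, but that too must be invoked or proved.)

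In (iii), the transport into $[0,1)$ is not a routine rescaling, and your claim is false for certain bases. The only case where transport is needed is $(m,n,p)=(1,1,1)$ with $v=q^{-2}+q^{-3}\ge 1$ (in all other cases $v<1$ already). Now take $q$ satisfying $1=q^{-(\ell-1)}+q^{-\ell}$ for some $\ell\ge 3$, e.g.\ the plastic number $q\approx 1.3247$ (root of $q^3=q+1$, so $\ell=3$); such $q$ lies in $(1,(1+\sqrt 5)/2)\setminus P$. Then $q^{-s}v=q^{-(s+2)}+q^{-(s+3)}=q^{-(s+3-\ell)}$, whose greedy expansion $0^{s+2-\ell}10^{\infty}$ is \emph{finite}, so $T_{n+2+s}(q^{-s}v)=T^{n+2+s}(q^{-s}v)=0$ for every $s$: your rescaled witness detects nothing. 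The reason the ``shifted Lemma'' breaks is that after shifting, the digitwise difference between $0^{s+1}m^{n+1}$ and the greedy prefix (whose first nonzero digit occurs earlier) takes values in $\set{-m,\ldots,m}$ rather than in $A$, and $1$ genuinely does have a short representation there, namely $1=q^{-(\ell-1)}+q^{-\ell}$. This is exactly why the paper splits its condition \eqref{8} into a strict case (where the rescaled point, the paper's $q^{-\ell}+q^{-(\ell+1)}$, does work) and an equality case, where it must switch to the different witness $q^{-(\ell-1)}+q^{-(\ell+1)}$, i.e.\ the word $0^{\ell-2}101$. Your proposal has no counterpart of this equality case.
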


\begin{proof} (i) By Proposition~\ref{p2} it is sufficient to prove that if
\begin{equation*}
(c_1, \ldots ,c_k),(d_1, \ldots, d_k)\in S_{A,q,k}
\quad\text{and}\quad (c_1, \ldots ,c_k) > (d_1, \ldots, d_k),
\end{equation*}
then
\begin{equation}\label{4}
\sum_{i=1}^{k} \frac{c_i}{q^i} > \sum_{i=1}^{k} \frac{d_i}{q^i}.
\end{equation}
Let $j$ be the first index such that $c_j>d_j$. Since $q=q_{m,n,p}$, the elements of $S_{A,q,k}$ do not contain any block of the form $am^nb$ with $a<m$ and $b\ge p$. Indeed, the sum corresponding to such a block is the same as the sum corresponding to the lexicographically larger block $(a+1)0^n(b-p)$. Therefore, since $d_j < m$, a block of the form $m^nb$ with $b \ge p$ cannot occur in $(d_{j+1}, \ldots,d_k)$. This implies that if $d_{\ell+1} \ldots d_{\ell +n+1}$ is a block of length $n+1$ that is contained in $(d_{j+1}, \ldots,d_k)$, then
\begin{align*}
\sum_{i=1}^{n+1} \frac{d_{\ell+i}}{q^i} &\le \max \set{\frac{m}{q} + \cdots + \frac{m}{q^{n-1}} + \frac{m-1}{q^n} + \frac{m}{q^{n+1}}, \frac{m}{q} + \cdots + \frac{m}{q^n} + \frac{p-1}{q^{n+1}}}\\
&=\frac{m}{q} + \cdots + \frac{m}{q^n} + \frac{p-1}{q^{n+1}}.
\end{align*}
Therefore
\begin{equation*}
\sum_{i=j+1}^{k} \frac{d_i}{q^i} <
\frac{1}{q^j}\sum_{k=0}^{\infty} \left(\frac{1}{q^{n+1}}\right)^k \left(\frac{m}{q} + \cdots + \frac{m}{q^n} + \frac{p-1}{q^{n+1}}\right)
=\frac{1}{q^j}
\end{equation*}
which implies \eqref{4}.

(ii) It follows from our assumption on $q$ that
\begin{equation}\label{5}
\frac{m}{q^2}+\cdots+\frac{m}{q^{n+1}}+\frac{p-1}{q^{n+2}}
<\frac{1}{q}<
\frac{m}{q^2}+\cdots+\frac{m}{q^{n+1}}+\frac{p}{q^{n+2}}.
\end{equation}

First we show that $T_k=T^k$ for every $k\le n+1$. Let $(c_1,\ldots ,c_k)$ and $(d_1,\ldots ,d_k)$ be sequences in $A^k$ satisfying $(c_1,\ldots,c_k) > (d_1,\ldots,d_k)$, and let $j$ be the smallest positive integer such that $c_j>d_j$. Then we have
\begin{align*}
 \sum_{i=1}^k \frac{c_i-d_i}{q^i}
&\ge\frac{1}{q^{j-1}}\left(\frac{1}{q}-\frac{m}{q^2}-\cdots -\frac{m}{q^{k+1-j}}\right)\\
&\ge \frac{1}{q^{j-1}}\left(\frac{1}{q}-\frac{m}{q^2}-\cdots -\frac{m}{q^{n+1}}\right)\\
&>0
\end{align*}
by using \eqref{5} in the last step.

Due to a remark following the proof of Proposition~\ref{p2}  it remains to show that $T_{n+2}\ne T^{n+2}$. The sequence $10^{n+1}$ clearly belongs to $S_{A,q,n+2}$. In order to show that $0m^np$ belongs to $S_{A,q,n+2}$ as well, we must prove that
\begin{equation*}
\sum_{i=1}^{n+2} \frac{c_i}{q^i}\ne \frac{m}{q^2}+\cdots+\frac{m}{q^{n+1}}+\frac{p}{q^{n+2}}
\end{equation*}
for every sequence $c_1\ldots c_{n+2}\in A^{n+2}$ satisfying $c_1\ldots c_{n+2}>0m^np$.

If $c_1=0$, this is clear. If $c_1\ldots c_{n+2}=10^{n+1}$, then
\begin{equation}\label{6}
\sum_{i=1}^{n+2} \frac{c_i}{q^i}=\frac{1}{q}<\frac{m}{q^2}+\cdots+\frac{m}{q^{n+1}}+\frac{p}{q^{n+2}}
\end{equation}
by \eqref{5}. In the remaining cases we have $c_1\ge 1$ and $c_1+\cdots +c_{n+2}\ge 2$,  so that
\begin{equation}\label{7}
\sum_{i=1}^{n+2} \frac{c_i}{q^i}\ge \frac{1}{q}+\frac{1}{q^{n+2}}>\frac{m}{q^2}+\cdots+\frac{m}{q^{n+1}}+\frac{p}{q^{n+2}}
\end{equation}
by \eqref{5} again.

Since $10^{n+1}, 0m^np\in S_{A,q,n+2}$ and $10^{n+1}>0m^np$, the inequality \eqref{6} shows that the map \eqref{3} with $k=n+2$ is not increasing.

(iii) As in part (ii), suppose that $q_{m,n,p-1}< q < q_{m,n,p}$ for some $n,p \ge 1$. It follows from \eqref{6} and \eqref{7} that if $x$ belongs to the nonempty interval
\begin{equation*}
D:=\left[\frac{m}{q^2}+\cdots+\frac{m}{q^{n+1}}+\frac{p}{q^{n+2}}, \frac{1}{q}+\frac{1}{q^{n+2}}\right),
\end{equation*}
then
\begin{equation*}
\sum_{i=1}^{n+2} \frac{b_i(x,A,q)}{q^i}=\frac{1}{q} <  \frac{m}{q^2}+\cdots+\frac{m}{q^{n+1}}+\frac{p}{q^{n+2}}=\frac{b_1(x,A_{n+2},q^{n+2})}{q^{n+2}},
\end{equation*}
i.e.,

\begin{equation*}
T_{n+2}(x) = q^{n+2}\left(x-\frac{m}{q^2}-\cdots-\frac{m}{q^{n+1}}-\frac{p}{q^{n+2}}\right) < q^{n+2}\left(x-\frac{1}{q}\right)=T^{n+2}(x).
\end{equation*}
If $(m,n,p)\not=(1,1,1)$ then the interval $D$ is contained in $[0,1)$. If $(m,n,p)=(1,1,1)$ and $1 > q^{-2} + q^{-3}$, then $D \cap [0,1)$ is nonempty. Therefore, also in this case the maps $T_{n+2}$ and $T^{n+2}$ differ on an interval contained in $[0,1)$. It remains to consider those values of $q$ that satisfy $1 \le q^{-2} + q^{-3}$.

If $1 \le q^{-2} + q^{-3}$, then let $\ell \ge 3$ be the (unique) positive integer satisfying
\begin{equation}\label{8}
\frac{1}{q^{\ell}} + \frac{1}{q^{\ell+1}} <  1 \le \frac{1}{q^{\ell-1}} + \frac{1}{q^{\ell}}.
\end{equation}
If the latter inequality in \eqref{8} is strict, then for each $x$ belonging to the nonempty interval
\begin{equation*}
\left[\frac{1}{q^{\ell}} + \frac{1}{q^{\ell+1}},\min \set{1,\frac{1}{q}+ \frac{1}{q^{\ell+1}}} \right),
\end{equation*}
we have $b_1(x,A,q) \ldots b_{\ell+1}(x,A,q)=10^{\ell}$, and
\begin{equation*}
T_{\ell+1}(x) \le q^{\ell+1} \left(x - \frac{1}{q^{\ell}} - \frac{1}{q^{\ell+1}}\right)< q^{\ell+1}\left(x-\frac{1}{q}\right)=T^{\ell+1}(x).
\end{equation*}
If the latter inequality in $\eqref{8}$ is in fact an equality, then we consider the nonempty interval
\begin{equation*}
\left[\frac{1}{q^{\ell-1}} + \frac{1}{q^{\ell+1}},\min \set{1,\frac{1}{q}+ \frac{1}{q^{\ell+1}}} \right).
\end{equation*}
For each $x$ belonging to this interval we have $b_1(x,A,q) \ldots b_{\ell+1}(x,A,q)=10^{\ell}$, and
\begin{equation*}
T_{\ell+1}(x) \le q^{\ell+1} \left(x - \frac{1}{q^{\ell-1}} - \frac{1}{q^{\ell+1}}\right)< q^{\ell+1}\left(x-\frac{1}{q}\right)=T^{\ell+1}(x).
\end{equation*}

For each $q \in (m,m+1) \setminus P$ we now have constructed an interval $I \subset [0,1)$ and a positive integer $k$ such that $T_k < T^k$ on $I$.
\end{proof}
\begin{remarks}\mbox{}
\begin{itemize}
\item[\rm (i)] It follows from the above proof that if $q_{m,n,p-1} < q < q_{m,n,p}$ $(n,p \ge 1)$ and $(m,n,p) \not= (1,1,1)$, then one may take $k=n+2$ in the statement of Proposition~\ref{p3}(iii).
\item[\rm (ii)] If $T_k(x) \not= T^k(x)$ for some $x \in [0,1)$, then the first digit of any expansion of $xq^{-1}$ in base $q$ with respect to $A$ must be zero, whence
\begin{equation*}
T_{k+1}\left(\frac{x}{q}\right)=T_k(x)< T^k(x) =T^{k+1}\left(\frac{x}{q}\right).
\end{equation*}
Hence if $T_k \not= T^k$ on a subinterval of $[0,1)$, then $T_n \not= T^n$ on a subinterval of $[0,1)$ for each integer $n \ge k$.
\end{itemize}
\end{remarks}

\begin{proof}[Proof of Theorem~\ref{t1}.] (i) Let $q \in P$. Note that the greedy expansion of $x \in J_{A,q}$ is optimal if and only if $T_k(x)=T^k(x)$ for each $k \ge 1$. Hence each $x \in J_{A,q}$ has an optimal expansion by Proposition~\ref{p3}(i).

(ii) Let $q \in (m,m+1) \setminus P$. It is well known (see, e.g., \cite{Par1960}, \cite{Ren1957}) that the map $T$ is ergodic with respect to a
unique normalized absolutely continuous $T$-invariant measure $\mu$ with a density that is positive on the interval $[0,1)$. According to Proposition~\ref{p3}(iii) there exists an interval $I \subset [0,1)$ and a number $k=k(q)$ such that $T_k < T^k$ on $I$. An application of Birkhoff's ergodic theorem yields that for almost every $x \in [0,1)$ there exists a positive integer $\ell=\ell(x)$ such that $T^{\ell}(x) \in I$. For each such $x$ the greedy expansion of $x$ is not optimal because the greedy expansion $b_{\ell+1}(x,A,q) b_{\ell+2}(x,A,q)\ldots$ of $T^{\ell}(x)$ is not optimal.
Since the map $T$ is nonsingular \footnote{Nonsingularity of $T$ means that $T^{-1}(B)$ is a null set whenever $B \subset J_{A,q}$ is a null set.} and since for each $x \in [1,m/(q-1))$ there exists a positive integer $n=n(x)$ such that $T^n(x) \in [0,1)$, we may conclude that $x$ has no optimal expansion for almost every $x \in J_{A,q}$.

It remains to show that that the set of numbers with an optimal expansion is nowhere dense. We call an expansion $(d_i)$ of a number $x \in J_{A,q}$ \emph{infinite} if $d_n>0$ for infinitely many $n \in \NN$. Otherwise it is called \emph{finite}. Let $x \in J_{A,q}$ be a number with no optimal and no finite expansion, and let $(b_i)=(b_i(x,A,q))$. Then there exists an expansion $(c_i)$ of $x$ and a number $n \in \NN$ such that the inequalities
\begin{equation*}
\sum_{i=1}^n \frac{b_i}{q^i} < \sum_{i=1}^n \frac{c_i}{q^i} < x
\end{equation*}
hold. Hence the number $x$ belongs to the interior of the interval
\begin{equation*}
E:=\left[\sum_{i=1}^n \frac{c_i}{q^i} , \left(\sum_{i=1}^n \frac{c_i}{q^i}\right) + \sum_{i=n+1}^{\infty} \frac{m}{q^i}\right].
\end{equation*}
It follows from Proposition~\ref{p1} that the set $E$ consists precisely of those numbers in $J_{A,q}$ that have an expansion starting with $c_1 \ldots c_n$. Since $(b_i)$ is infinite by hypothesis, there exists a number $\delta=\delta(x) >0$ such that $(x-\delta, x+\delta) \subset E$ and such that the greedy expansion of each number belonging to $(x-\delta,x+\delta)$ starts with $b_1 \ldots b_n$ (this follows for instance from Lemmas 3.1 and 3.2 in \cite{DVK2009}). Hence none of the numbers in $(x-\delta,x+\delta)$ has an  optimal expansion. Denoting by $\mathcal{O}_q$ the set of numbers in $J_{A,q}$ with an optimal expansion and its closure by $\overline{\mathcal{O}_q}$ we may thus conclude that numbers belonging to $\overline{\mathcal{O}_q} \setminus \mathcal{O}_q$ have a finite expansion whence $\overline{\mathcal{O}_q} \setminus \mathcal{O}_q$ is at most countable. This implies in particular that the set $\overline{\mathcal{O}_q}$ is also a null set and has therefore no interior points.
\end{proof}

For each positive integer $k$, the map $T_k$ is also ergodic with respect to a unique normalized absolutely continuous $T_k$-invariant measure $\mu_k$ as follows from Theorem 4 in \cite{LY1982}. Since $T_1=T$, the measure $\mu$ introduced in the proof of Theorem~\ref{t1} equals $\mu_1$. Methods to construct an explicit formula for (a version of) the density of the measure $\mu_k$ can be found in \cite{K1990} (see also \cite{G2009}, \cite{DK2009}).

\begin{corollary}\label{c1} $q \in P$ if and only if $\mu_1=\mu_k$ for each $k \ge1$.
\end{corollary}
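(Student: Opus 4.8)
The plan is to prove the corollary by connecting the equality of invariant measures to the coincidence of the greedy transformations $T_k$ and $T^k$, and then invoking the dichotomy already established in Proposition~\ref{p3}. The key observation is that $T_k = T^k$ for all $k \ge 1$ is exactly the characterization of $P$ given in Proposition~\ref{p3}(i), so the corollary amounts to showing that $\mu_1 = \mu_k$ for all $k$ is equivalent to $T_k = T^k$ for all $k$.

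First I would prove the easy direction. Suppose $q \in P$. By Proposition~\ref{p3}(i) we have $T_k = T^k$ for every $k \ge 1$. Now $\mu_1$ is the unique $T$-invariant normalized absolutely continuous measure, hence it is $T^k$-invariant for every $k$ (any $T$-invariant measure is automatically invariant under iterates of $T$). Since $T^k = T_k$, the measure $\mu_1$ is a normalized absolutely continuous $T_k$-invariant measure. By the uniqueness of such a measure for $T_k$, we conclude $\mu_k = \mu_1$ for every $k \ge 1$.

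For the converse I would argue by contraposition: assume $q \in (m,m+1) \setminus P$ and exhibit some $k$ with $\mu_k \ne \mu_1$. By Proposition~\ref{p3}(iii) there is an interval $I \subset [0,1)$ and an integer $k$ with $T_k < T^k$ on $I$; by the right-continuity remark following Proposition~\ref{p2}, the inequality $T_k < T^k$ in fact holds on a whole subinterval. The strategy is to show that this strict pointwise inequality forces the two invariant densities to differ. One clean way is to note that $T_k$ and $T^k$ have different transfer (Perron--Frobenius) operators, since they disagree on a set of positive Lebesgue measure; if $\mu_1 = \mu_k$ held, then the common density $f_1 = f_k$ would have to be a fixed point of both transfer operators simultaneously, and I would derive a contradiction by comparing how the two maps push forward the measure on $I$. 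Concretely, because $f_1$ is strictly positive on $[0,1)$ and $T_k < T^k$ on $I$, the preimage structures under $T_k$ and $T^k$ over a suitable target subinterval differ, so the invariance equations $\mu_1(B) = \mu_1(T_k^{-1}B) = \mu_1((T^k)^{-1}B)$ cannot both hold for all Borel $B$.

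The main obstacle will be making the last step rigorous without an explicit formula for the densities. The subtlety is that $\mu_k = \mu_1$ is an equality of measures, and translating the pointwise inequality $T_k < T^k$ on a positive-measure set into $\mu_k \ne \mu_1$ requires care: two different maps can in principle share an invariant measure. I expect the cleanest route is to use positivity of the density $f_1$ together with the fact that $T_k$ and $T^k$ are piecewise linear expanding maps whose branch partitions differ on $I$, so that the invariance relation $\int g \circ T_k \, d\mu_1 = \int g \, d\mu_1$ fails for a well-chosen test function $g$ supported near the image discrepancy, contradicting the assumed $T_k$-invariance of $\mu_1 = \mu_k$. Assembling this contradiction, together with the forward direction, completes the equivalence.
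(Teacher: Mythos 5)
Your forward direction is correct and matches the paper: $q\in P$ gives $T_k=T^k$ by Proposition~\ref{p3}(i), $\mu_1$ is $T^k$-invariant because it is $T$-invariant, and uniqueness of the normalized absolutely continuous $T_k$-invariant measure forces $\mu_k=\mu_1$. The converse, however, has a genuine gap, and you have in fact put your finger on it yourself: you concede that ``two different maps can in principle share an invariant measure,'' and your proposal never resolves this. Merely knowing that $T_k$ and $T^k$ differ on a set of positive Lebesgue measure, together with positivity of the density $f_1$, is \emph{not} enough to conclude $\mu_k\ne\mu_1$: for instance, the maps $x\mapsto 2x \pmod 1$ and $x\mapsto 3x \pmod 1$ differ almost everywhere yet both preserve Lebesgue measure. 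Your appeal to differing transfer operators, differing ``preimage structures,'' or a ``well-chosen test function $g$'' is exactly the statement to be proved, not an argument for it; as written, the contradiction is asserted rather than derived.

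The missing idea is the order relation between the two maps, which is what the paper exploits. One has $T_k\le T^k$ on all of $J_{A,q}$ (the greedy algorithm at scale $k$ never overshoots the iterated one-step greedy map), and by right-continuity of both maps there is a subinterval $J\subset I$ and a threshold $t>0$ with $T_k<t<T^k$ on $J$. Now take the specific downward-closed test set $B=[0,t)$: the global inequality gives the nesting $T^{-k}\bigl([0,t)\bigr)\subset T_k^{-1}\bigl([0,t)\bigr)$, and the strict separation on $J$ shows that the difference of these two preimages contains $J$. If $\mu_1=\mu_k$, then $\mu_1$ is invariant under both $T^k$ and $T_k$, so
\begin{equation*}
0=\mu_1\left(T_k^{-1}[0,t)\right)-\mu_1\left(T^{-k}[0,t)\right)\ge \mu_1(J)>0,
\end{equation*}
the last inequality because the density of $\mu_1$ is positive on $[0,1)$ and $J\subset[0,1)$. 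This is the concrete mechanism that converts the pointwise inequality $T_k<T^k$ on an interval into a contradiction with equality of the measures; without it (or an equivalent device using the ordering), your argument does not close.
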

\begin{proof}
Proposition~\ref{p3}(i) implies that $\mu_1=\mu_2 = \cdots$ if $q$ belongs to $P$. Conversely, suppose that $q \in (m,m+1) \setminus P$ and let $I \subset [0,1)$ be an interval such that $T_k < T^k$ on $I$ for some positive integer $k$. Since the maps $T_k$ and $T^k$ are continuous from the right, there exists a subinterval $J \subset I$ and a number $t>0$ such that $T_k < t < T^k$ on $J$. Note that $T^{-k}([0,t)) \subset T_k^{-1}([0,t))$ because $T_k \le T^k$ on $J_{A,q}$. If we had $\mu_k = \mu_1$, then $\mu_1$ would also be $T_k$-invariant, whence
\begin{equation*}
0=\mu_1\left(T_k^{-1}[0,t)\right) - \mu_1\left(T^{-k}[0,t)\right) \ge \mu_1(J)
\end{equation*}
which contradicts the fact that the density of $\mu_1$ is positive on the interval $[0,1)$.
\end{proof}
\begin{remarks}\mbox{}
\begin{itemize}
\item[\rm (i)] For \emph{each} $q \in (m,m+1)$, almost every $x \in J_{A,q}$ has uncountably many expansions (see \cite{Si2003}, \cite{DDV2007}). It follows from Theorem~\ref{t1}(i) that a number with an optimal expansion may have uncountably many expansions. We do not know whether the greedy expansion of a number with at most countably many expansions is always optimal.
\item[\rm (ii)] It has been shown in \cite{GS2001} (see also \cite{DVK2009}, \cite{DVK2010}) that if $q\in (m,m+1)$ is close enough to $m+1$, then the set $\uu_q$ of numbers in $J_{A,q}$ with a unique expansion is uncountable. Moreover, the Hausdorff dimension of $\uu_q$ tends to one if $q \to m+1$. Since a unique expansion is clearly optimal, the same properties hold for the set of numbers belonging to $J_{A,q}$ with an optimal expansion.
\item[\rm (iii)] Let $\uu$ be the set of bases $q \in (m,m+1)$ such that the number $1 \in J_{A,q}$ has a unique expansion. The set $\uu$ has been extensively studied in \cite{EHJ1991}, \cite{KomLor2007}, \cite{DVK2009}. For instance it has been shown in \cite{DVK2009} that $\uu_q$ is closed if and only if $q \in (m,m+1) \setminus \uuu$ where $\uuu$ is the closure of $\uu$. It follows from the proof of Theorem 1.3 in \cite{DVK2009} that each number $x$ belonging to the closure $\overline{\uu_q}$ of the set $\uu_q$ has an optimal expansion for each $q \in (m,m+1)$. We conclude this section with an example showing that the set $\mathcal{O}_q$ of numbers with an optimal expansion properly contains $\overline{\uu_q}$ for all $q \in (m,m+1)$.
\end{itemize}
\end{remarks}
\begin{example} Fix $q \in (m,m+1)$. It is well known that each number $x \in J_{A,q} \setminus \set{0}$ has a lexicographically largest infinite expansion $(a_i(x))$ which coincides with its greedy expansion if and only if the latter is infinite. If the greedy expansion $(b_i(x))$ of a number $x \in J_{A,q}\setminus \set{0}$ is finite and $b_n(x)$ is its last nonzero element, then $(a_i(x))=b_1(x) \ldots b_{n-1}(x) (b_n(x)-1) a_1(1) a_2(1) \ldots$. For convenience we set $(a_i(0)):=0^{\infty}$. It is shown in \cite{DVK2009} that $\overline{\uu_q} \subset \vv_q$ where $\vv_q$ is the set of numbers $x \in J_{A,q}$ such that
\begin{equation*}
(m-a_{n+1}(x))(m-a_{n+2}(x)) \ldots \le a_1(1) a_2(1) \ldots \quad \text{whenever} \quad a_n(x) >0.
\end{equation*}
Let $k$ be the largest positive integer satisfying the inequality $\sum_{i=1}^k mq^{-i} < 1$, and consider the number
\begin{equation*}
x:=\frac{1}{q} + \frac{1}{q^{k+2}}.
\end{equation*}
The greedy expansion $(b_i(x))$ of $x$ is clearly given by $10^k10^{\infty}$. Our choice of $k$ implies that $(b_i(x))$ is optimal. However, the number $x$ does not belong to $\vv_q$ because $a_1(x) \ldots a_{k+2}(x)=10^{k+1}$ and $a_1(1) \ldots a_{k+1}(1)=m^k c$ with $c < m$.
\end{example}

\section{Optimal expansions in negative bases}\label{s4}

Given a positive integer $m$ and a real number $m <q\le m+1$, by an expansion of a real number $x$ in base $-q$ we mean a sequence $(c_i)=c_1c_2 \ldots$ of integers $c_i\in A:=\set{0,1,\ldots,m}$ satisfying
\begin{equation*}
\sum_{i=1}^{\infty} \frac{c_i}{(-q)^i} =x.
\end{equation*}

One easily verifies that $(c_i)$ is an expansion of a real number $x$ in base $-q$ if and only if $(c_i'):=(m-c_1,c_2,m-c_3,c_4,\ldots)$ is an expansion of $x':=x+mq/(q^2-1)$ in base $q$ (with respect to $A$). It follows from Proposition~\ref{p1} that each $x$ belonging to the interval
\begin{equation*}
J_{A,-q}:=\left[\frac{-mq}{q^2-1},\frac{m}{q^2-1}\right]
\end{equation*}
has an expansion in base $-q$.

\begin{definition}
An expansion $(d_i)$ of $x$ in base $-q$ is \emph{optimal} if for any other expansion $(c_i)$ of $x$ in base $-q$ we have
\begin{equation*}
\left| x-\sum_{i=1}^{n} \frac{d_i}{(-q)^i}\right|\le \left| x-\sum_{i=1}^{n} \frac{c_i}{(-q)^i}\right|
\end{equation*}
for all $n=1,2,\ldots .$
\end{definition}

We only consider here expansions in negative integer bases $-2,-3,\ldots$. While in positive integer bases the greedy expansion is always optimal, in negative integer bases there are infinitely many numbers with no optimal expansion:

\begin{proposition}\label{p4}
In negative integer bases only the unique expansions are optimal.
\end{proposition}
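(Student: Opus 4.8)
The plan is to prove the two implications separately. The forward one, that a unique expansion is optimal, is immediate: if $x$ has only one expansion, the defining inequalities hold vacuously. The entire content is the converse, namely that if $x\in J_{A,-q}$ has at least two expansions, then it has no optimal expansion. Since we are in an integer base we have $q=m+1$ and $A=\set{0,1,\ldots,q-1}$, so $J_{A,-q}=[-q/(q+1),\,1/(q+1)]$.

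First I would set up the bookkeeping. For an expansion $(c_i)$ of $x$ put $\theta_n:=x-\sum_{i=1}^n c_i(-q)^{-i}$ and $y_n:=(-q)^n\theta_n=\sum_{j\ge1}c_{n+j}(-q)^{-j}$, so that $y_n\in J_{A,-q}$, $|\theta_n|=|y_n|\,q^{-n}$, and $y_n=-q\,y_{n-1}-c_n$ with $y_0=x$. Given a state $y=y_{n-1}$, a digit $c_n$ is admissible precisely when $-qy-c_n\in J_{A,-q}$, i.e.\ when $c_n$ is an integer lying in the interval $[-qy-\frac{1}{q+1},\,-qy+\frac{q}{q+1}]$, which has length $1$. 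Hence there are at most two admissible digits, and exactly two precisely when the left endpoint is an integer $c$, the admissible digits then being $c$ and $c+1$.

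The key step, which I expect to be the main obstacle, is an endpoint computation showing that a branch always lands on the two ends of $J_{A,-q}$ and that these ends have forced continuations. At a branch state one has $-qy=c+\frac{1}{q+1}$, so the choice $c_n=c$ gives $y_n=-qy-c=\frac{1}{q+1}$ while the choice $c_n=c+1$ gives $y_n=-\frac{q}{q+1}$. I would then check directly that from the state $\frac{1}{q+1}$ the only admissible digit is $0$, leading to the state $-\frac{q}{q+1}$, and that from $-\frac{q}{q+1}$ the only admissible digit is $m=q-1$, leading back to $\frac{1}{q+1}$. Consequently, whatever the remainder of the expansion, $c_n=c$ forces $c_{n+1}=0$ and $c_n=c+1$ forces $c_{n+1}=m$; as a byproduct $x$ then has exactly two expansions, both eventually alternating between the two endpoints.

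Finally I would run the crossing argument at the first position $N$ where two expansions of $x$ differ. All expansions share the prefix $c_1\ldots c_{N-1}$, hence the state $y_{N-1}$, at which there are exactly the two admissible digits $c$ and $c+1$. By the endpoint computation, over all expansions of $x$ the pair $(|\theta_N|,|\theta_{N+1}|)$ takes only the two values $\bigl(\frac{1}{q+1}q^{-N},\ \frac{q}{q+1}q^{-(N+1)}\bigr)$, when $c_N=c$ and hence $c_{N+1}=0$, and $\bigl(\frac{q}{q+1}q^{-N},\ \frac{1}{q+1}q^{-(N+1)}\bigr)$, when $c_N=c+1$ and hence $c_{N+1}=m$. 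Since $q\ge 2$ we have $\frac{1}{q+1}<\frac{q}{q+1}$, so minimizing $|\theta_N|$ forces $c_N=c$ whereas minimizing $|\theta_{N+1}|$ forces $c_N=c+1$. No single expansion can satisfy both, so $x$ has no optimal expansion, completing the proof. Once the endpoint/forcing computation is established, this contradiction is immediate.
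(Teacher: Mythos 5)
Your proof is correct, but it reaches the key structural fact by a genuinely different route. The paper reuses the conjugacy it set up just before the proposition for the existence statement: $(c_i)$ is an expansion of $x$ in base $-q$ if and only if $(c_i')=(m-c_1,c_2,m-c_3,c_4,\ldots)$ is an expansion of $x'=x+mq/(q^2-1)$ in base $q$. Since in a positive integer base a non-uniquely representable number has exactly two expansions, with tails $(p+1)0^{\infty}$ and $pm^{\infty}$, the paper gets for free that such an $x$ has exactly two expansions in base $-q$, with complementary alternating tails, and then compares the errors at all positions $2n$ and $2n+1$ with $2n\ge k$, using the two series representations of $x$. You instead derive the two-expansion structure natively in base $-q$: the length-one interval of admissible digits, the forced dynamics at the endpoints $1/(q+1)$ and $-q/(q+1)$ of $J_{A,-q}$, and the observation that any branching lands exactly on those two endpoints; your error comparison then needs only the two consecutive positions $N$ and $N+1$. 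Both arguments ultimately rest on the same phenomenon---the two tails alternate $0$ and $m$ in opposite phase, so the error magnitudes alternate between $\frac{1}{q+1}q^{-n}$ and $\frac{q}{q+1}q^{-n}$---but yours is self-contained (no appeal to the classification of double expansions in integer bases), at the price of the admissibility and forcing analysis, while the paper's is shorter because the conjugacy is already in place.

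One small imprecision to fix in your write-up: having an integer left endpoint is not quite equivalent to having two admissible digits. At the states $y=1/(q+1)$ and $y=-q/(q+1)$ the two integers in the unit interval are $\set{-1,0}$, respectively $\set{q-1,q}$, and only one member of each pair lies in $A$, so there is only one admissible digit even though the endpoint is an integer. This is harmless for your argument: at a genuine first branching the two differing digits come from actual expansions, hence both lie in $A$ and must be the two integer endpoints of the interval, and your direct computation at the two exceptional states is exactly what establishes the forcing. But the equivalence as stated should be weakened to the implication you actually use.
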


\begin{proof}
Let $q=m+1$ for some positive integer $m$. If $x \in J_{A,-q}$ has no unique expansion in base $-q$  then $x$ has exactly two expansions $(c_i)$ and $(d_i)$ in base $-q$ because $(c_i')$ and $(d_i')$ are the only expansions of $x'$ in base $q$. Moreover, there exists a positive integer $k$ such that $c_i'=d_i'$ for $1 \le i \le k-1$ and such that the sequences $(c_k',c_{k+1}', \ldots )$ and $(d_k',d_{k+1}',\ldots)$ are equal to $(p+1)0^{\infty}$ or $pm^{\infty}$ for some $p\in \set{0,\ldots, m-1}$. If necessary, interchange $(c_i)$ and $(d_i)$ so that $(c_i') > (d_i')$, and let $n$ be a positive integer such that $2n \ge k$. Then
\begin{equation*}
x=\left( \sum_{i=1}^{2n}\frac{c_i}{(-q)^i}\right)-\sum_{i=n}^{\infty}\frac{m}{q^{2i+1}}=\left( \sum_{i=1}^{2n}\frac{d_i}{(-q)^i}\right)+\sum_{i=n}^{\infty}\frac{m}{q^{2i+2}}
\end{equation*}
whence
\begin{equation*}
\left| x-\sum_{i=1}^{2n+1} \frac{c_i}{(-q)^i}\right|=\frac{1}{q}\left| x-\sum_{i=1}^{2n+1} \frac{d_i}{(-q)^i}\right|<\left| x-\sum_{i=1}^{2n+1} \frac{d_i}{(-q)^i}\right|,
\end{equation*}
and
\begin{equation*}
\left| x-\sum_{i=1}^{2n} \frac{d_i}{(-q)^i}\right|=\frac{1}{q}\left| x-\sum_{i=1}^{2n} \frac{c_i}{(-q)^i}\right|<\left| x-\sum_{i=1}^{2n} \frac{c_i}{(-q)^i}\right|
\end{equation*}
so that the expansions $(c_i)$ and $(d_i)$ are not optimal.
\end{proof}

\end{document}